\documentclass{amsart}

\usepackage{amsmath,amssymb,amsthm,mathtools}
\usepackage{microtype}
\usepackage{enumitem}
\usepackage[hidelinks]{hyperref}

\newtheorem{theorem}{Theorem}[section]
\newtheorem{proposition}[theorem]{Proposition}
\newtheorem{lemma}[theorem]{Lemma}
\newtheorem{corollary}[theorem]{Corollary}
\theoremstyle{definition}
\newtheorem{definition}[theorem]{Definition}
\theoremstyle{remark}

\newcommand{\Fp}{\mathbb F_p}
\newcommand{\Fq}{\mathbb F_q}
\newcommand{\op}{\mathrm{op}}

\title[Quadratic Expansion over Prime Fields]
{Quadratic Expansion over Prime Fields via Centered Collisions and Popular-Sum Amplification}

\author{Yao Zhi}
\email{ericyao2026@gmail.com}

\subjclass[2020]{11B30, 11T23}
\keywords{sum-product estimates, polynomial expanders, finite fields, point-plane incidences, additive energy}

\begin{document}

\begin{abstract}
Let $p$ be an odd prime, let $\varnothing\neq A\subseteq\Fp$ have cardinality
$N$, and let $f\in\Fp[x,y]$ be a non-degenerate quadratic polynomial.
Writing $S=|A+A|$ and $M=|f(A,A)|$, we prove the full-range trade-off
\[
 S^8M^6\gtrsim
 \frac{N^{17}}{(1+N^3/p^2)^3}.
\]
Consequently,
\[
 \max\{|A+A|,|f(A,A)|\}
 \gtrsim
 \min\{N^{17/14},\,p^{3/7}N^{4/7}\},
\]
and in particular the exponent $17/14$ holds throughout $N\le p^{2/3}$.
The proof combines a centered collision estimate for
$F(u,v,w)=f(u+v,w)$, a mixed fourth-energy bound, and a popular-sum
amplification.  Two complementary incidence estimates enter the
argument: a centered spectral bound in the dense collision regime and
a point--plane bound in the sparse regime.
\end{abstract}

\maketitle

\section{Introduction and main results}

The sum--product principle, originating in work of Erd\H{o}s and
Szemer\'edi \cite{ErdosSzemeredi} and developed over finite fields by
Bourgain, Katz and Tao \cite{BKT}, asserts that a set with strong
additive structure must expand under a genuinely nonlinear operation.  Quantitative finite-field developments include
\cite{Garaev,Vu,RRSh}; Fourier-analytic and polynomial expansion
methods appear in \cite{HartLiShen,BukhTsimerman,Tao}, while recent
work continues to connect spectral incidence geometry with polynomial
expansion \cite{AralaChow}.

For two-variable conditional expanders, one studies lower bounds for
$\max\{|A+A|,|f(A,A)|\}$; see, for example,
\cite{HegyvaryHennecart,MojarradPham}.  For every non-degenerate
quadratic polynomial, Koh, Nassajian Mojarrad, Pham and Valculescu
\cite[Theorem~1.5]{KMPV} proved the exponent $6/5$ for
$|A|\le p^{5/8}$.  Related three-variable quadratic expansion and
finite-field incidence methods were developed by Pham, Vinh and de
Zeeuw \cite{PhamVinhDeZeeuw}.  Using the higher-energy method of Shakan and Shkredov
\cite{ShakanShkredov}, Mirzaei \cite{Mirzaei} obtained the exponent
$74/61$ for $|A|\le p^{1/2}$ through the mixed fourth-energy quantity
$d_4^+(A)$.  Mohammadi and Stevens later obtained the stronger
exponent $28/23$ for $\max\{|A-A|,|f(A,A)|\}$ in the smaller range
$|A|\ll p^{23/52}$ \cite[Theorem~4]{MohammadiStevensLow}; this does not
imply an $A+A$ estimate.  Related two-variable expander results include \cite{TranNguyen}.
Our argument combines a centered spectral incidence estimate, Rudnev's
sparse point--plane theorem \cite{Rudnev}, and the additive
double-counting inequality recorded in
\cite[Proposition~1]{MohammadiStevens}, whose proof is extracted there
from Rudnev, Shakan and Shkredov \cite{RudnevShakanShkredov}.

We use the following standard notion of degeneracy.

\begin{definition}\label{def:nondegenerate}
Let $\mathbb F$ be a field.  A polynomial $f\in\mathbb F[x,y]$ is \emph{non-degenerate} if there do not
exist $h\in\mathbb F[t]$, $(\lambda,\mu)\neq(0,0)$, and $\tau\in\mathbb F$ such that
$f(x,y)=h(\lambda x+\mu y+\tau)$.
\end{definition}

Throughout the prime-field part of the paper, write
$N=|A|$, $S=|A+A|$, and $M=|f(A,A)|$.  We use $X\ll Y$ and $X\gg Y$
for absolute-constant inequalities.  The notation $X\lesssim Y$ and
$X\gtrsim Y$ may additionally suppress powers of logarithms coming
from dyadic decompositions, and $X\approx Y$ means that both
$X\lesssim Y$ and $Y\lesssim X$ hold.

The main theorem is most naturally stated as a product trade-off.

\begin{theorem}[Full-range quadratic expansion]\label{thm:main}
Let $p$ be an odd prime, let $\varnothing\neq A\subseteq\Fp$, and let
$f\in\Fp[x,y]$ be a non-degenerate quadratic polynomial.  Then
\begin{equation}\label{eq:main-product}
 S^8M^6\gtrsim
 \frac{N^{17}}{(1+N^3/p^2)^3}.
\end{equation}
Consequently,
\begin{equation}\label{eq:main-max}
 \max\{S,M\}
 \gtrsim
 N^{17/14}(1+N^3/p^2)^{-3/14}
 \gtrsim
 \min\{N^{17/14},\,p^{3/7}N^{4/7}\}.
\end{equation}
\end{theorem}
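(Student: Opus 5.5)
The plan is to follow the route announced in the abstract.

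\emph{Step 1: reduce to a centered quadratic form.} Since $f$ is non-degenerate and quadratic, write
\[
f(x,y)=ax^2+bxy+cy^2+dx+ey+g .
\]
After an invertible affine change of variables, which preserves the sizes of $A+A$ and $f(A,A)$ up to constants, we may assume $f$ is not a function of a single linear form. Put $F(u,v,w)=f(u+v,w)$ for $u,v,w\in A$. Every value of $F$ lies in $f(A+A,A)$. Let $s$ range over a popular subset $P\subseteq A+A$, meaning each $s\in P$ has $r_{A+A}(s)\ge N^2/(2S)$; then $\sum_{s\in P} r_{A+A}(s)\gg N^2$.

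\emph{Step 2: popular-sum amplification.} Grouping pairs by the sum $s$ gives
\[
\sum_{s} r_{A+A}(s)\,|f(s,A)|\;\ll\; N\cdot\bigl|\{(s,a,a')\in P\times A\times A : f(s,a)=f(s',a')\}\bigr| ,
\]
so the question becomes an expansion statement for $f(P,A)$ in terms of $M$. To make the connection, write $f(s,w)$ for $s=a_1+a_2$ and compare it with $f(a_1,\cdot)$ and $f(a_2,\cdot)$. This is where the mixed fourth energy $d_4^+(A)$ enters: one bounds
\[
d_4^+(A)\lesssim S^{\alpha}N^{\beta}
\]
using the Rudnev--Shakan--Shkredov double-counting inequality \cite[Proposition~1]{MohammadiStevens}. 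One then uses Cauchy--Schwarz to pass from collisions of $F$ to $d_4^+$ and the additive energy, with exponents tuned so that the final combination yields $S^8M^6$.

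\emph{Step 3: the centered collision estimate.} Let $Q$ be the number of solutions to
\[
F(u,v,w)=F(u',v',w')
\]
with variables in $A$ and sums restricted to $P$. Cauchy--Schwarz gives $Q\ge N^6/M'$, where $M'=|F(A,A,A)|$ is controlled via $M$ and $S$. The upper bound on $Q$ comes from incidences. Expanding the quadratic converts the equation into a point--plane incidence problem in $\mathbb F_p^3$, with about $N^3$ points and planes (weighted by popular sums).
\begin{itemize}
\item In the sparse regime $N^3\le p^2$, apply Rudnev's bound \cite{Rudnev}, which gives a main term $\ll m^{3/2}$ for $m$ points and planes, plus a collinearity term. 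Non-degeneracy of $f$ is used to control points on a line.
\item In the dense regime, use the centered spectral bound. Subtract the expected count $m^2/p$ and bound the error by a character-sum/eigenvalue estimate of size $\sqrt{p}\,m$. This loss is what produces the factor $(1+N^3/p^2)^3$.
\end{itemize}
Centering means measuring $F$ relative to its mean on fibers, so that the main term cancels.

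\emph{Step 4: combine.} Put the lower and upper bounds on $Q$ together with the $d_4^+$ estimate and optimize over the dyadic parameters; this yields \eqref{eq:main-product}. Then \eqref{eq:main-max} follows because $\max\{S,M\}^{14}\ge S^8M^6$, together with
\[
(1+N^3/p^2)^{-3/14}\gg \min\{1,\,(p^2/N^3)^{3/14}\},
\]
which gives $\min\{N^{17/14},\,p^{3/7}N^{4/7}\}$.

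The main obstacle is Step 3, specifically the degenerate configurations in Rudnev's theorem. We need to show that many collinear points or planes in the incidence problem force $f$ to be degenerate, or force $A+A$ to be small; the latter case is harmless. We also need the exponents from Step 2 to match those from Step 3 exactly so as to give $17$ against $(8,6)$. If they do not match, the first thing to try is a different Hölder split, placing $S^8$ on the energy side and $M^6$ on the collision side.
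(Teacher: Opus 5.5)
\textbf{There is a genuine gap: your Step~3 never connects collisions of $F$ to $M$, and Step~2 misassigns the role of the Mohammadi--Stevens inequality.}

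\emph{Step~3.} You take all variables in $A$ and let $u+v$ run over popular sums. The values of $F(u,v,w)=f(u+v,w)$ then lie in $f(A+A,A)$. You say $M'=|f(A+A,A)|$ is ``controlled via $M$ and $S$'', but you give no mechanism for this. The only evident bound, $|f(A+A,A)|\le SN$, removes $M$ entirely, so this set-up cannot produce the factor $M^6$.

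The paper's idea is to use \emph{differences}. Fix an arbitrary $B$ and take a dyadic level set $D\subseteq A-B$ on which $r_{A-B}\ge t$. Count triples $(u,v,w)\in D\times B\times A^\circ$. Whenever $u=a-b$ with $(a,b)\in A\times B$, we have $F(u,b,w)=g(a,w)\in f(A,A)$. So at least $dtN^\circ$ triples have $F$-value in a set of size $M$. Apply Cauchy--Schwarz against $E_F(D,B,A^\circ)$, bounded as follows:
\begin{itemize}
\item by Rudnev's theorem, giving $X^{3/2}+KX$, when $dLN^\circ\le c_0p^2$;
\item by the centered Vinh estimate in discrepancy form otherwise.
\end{itemize}
This gives $d_4^+(A)\lesssim M^2/N^2+M^2N/p^2$. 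Here $M$ enters and $S$ does not.

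Several of your descriptions of this step are also inaccurate:
\begin{itemize}
\item The spectral error for point--plane incidences in $\Fp^3$ is $p\|r\|_2\|s\|_2\approx pX$, not $\sqrt p\,X$. Collinear configurations already violate $\sqrt p\,X$.
\item The factor $1+N^3/p^2$ comes from the \emph{uniform} term $MP/p$ of the discrepancy estimate (the case $t\lesssim ML/p$), not from the spectral loss.
\item Collinearity is not the obstacle you anticipate. Line-richness is $O(\max\{|U|,|V|,|W|\})$ by an elementary check, and the resulting $KX$ term is absorbed using $M\ge N/2$.
\item In the pure mixed case you must delete the slice $w_*=-\ell_{1,0}/c_0$.
\item A general affine change of variables, as in your Step~1, is not admissible because it destroys the $A\times A$ structure. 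Only the transposition $f\leftrightarrow f^{\op}$ is used.
\end{itemize}

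\emph{Step~2.} The Mohammadi--Stevens (Rudnev--Shakan--Shkredov) inequality does not bound $d_4^+(A)$ in terms of $S$. It is the statement
\[
E_{4/3}^+(B)^3\lesssim \frac{S^8E_4^+(A)^2E_4^+(A,E)\mu^4\nu^4}{N^{24}},
\qquad E_{4/3}^+(B)\approx|F|\nu^{4/3},
\]
where $F\subseteq B-B$ is as in Proposition~\ref{prop:popular-sum}, and it is the sole source of $S$. The $M$-bound for $d_4^+$ is inserted into the three fourth-energy factors, which is exactly why $M^6$ and $(1+N^3/p^2)^3$ appear. Then $|E|\mu\le N|F|$ and $\mu\le N$ cancel $|F|^3\nu^4$ and leave $N^{17}\lesssim S^8M^6(1+N^3/p^2)^3$.

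Your Step~4 (``optimize over the dyadic parameters'') does none of this bookkeeping, and you concede the exponents may not match. So the proposal does not establish \eqref{eq:main-product}. Your final deduction of \eqref{eq:main-max} from \eqref{eq:main-product} is correct and coincides with the paper's.
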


The sparse-range statement of primary interest follows immediately.

\begin{corollary}[The $17/14$ range]\label{cor:seventeen-fourteen}
Under the hypotheses of Theorem~\ref{thm:main}, if $N\le p^{2/3}$, then
\[
 \max\{|A+A|,|f(A,A)|\}\gtrsim N^{17/14}.
\]
\end{corollary}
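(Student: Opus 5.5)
The corollary is a specialization of Theorem~\ref{thm:main}, so the plan is to read off the second inequality in \eqref{eq:main-max} and check which term of the minimum is active when $N\le p^{2/3}$. I will not use any new estimate; the whole argument is bookkeeping on \eqref{eq:main-max}.

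First, assume $N\le p^{2/3}$. Then $N^3\le p^2$, so $1\le 1+N^3/p^2\le 2$. Hence
\[
 (1+N^3/p^2)^{-3/14}\ge 2^{-3/14}\gg 1 .
\]
Substituting this into the middle expression of \eqref{eq:main-max} gives $\max\{S,M\}\gtrsim N^{17/14}$ directly. The absolute constant $2^{-3/14}$ is absorbed by $\gtrsim$, which permits absolute constants and logarithmic losses.

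As a consistency check, the same conclusion comes from the final expression $\min\{N^{17/14},p^{3/7}N^{4/7}\}$. The inequality $N^{17/14}\le p^{3/7}N^{4/7}$ is equivalent to $N^{9/14}\le p^{6/14}$, that is, $N^{3}\le p^{2}$. So in this range the minimum equals $N^{17/14}$.

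The only point that needs care is that the implied constants in $\gtrsim$ must be uniform in $p$, $A$ and $f$. This uniformity is inherited from the statement of Theorem~\ref{thm:main}, so I expect no real obstacle. Finally, recalling $S=|A+A|$ and $M=|f(A,A)|$ gives the displayed inequality of the corollary.
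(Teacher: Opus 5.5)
Your proposal is correct and matches the paper's argument: the paper also obtains the corollary directly from \eqref{eq:main-max}, using that $N^3/p^2\le 1$ bounds the factor $(1+N^3/p^2)^{-3/14}$ below by an absolute constant. Your consistency check is also right: $\min\{N^{17/14},p^{3/7}N^{4/7}\}=N^{17/14}$ exactly when $N^3\le p^2$.
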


Corollary~\ref{cor:seventeen-fourteen} improves the previous general
quadratic $A+A$ exponent $74/61$ from \cite{Mirzaei} and extends the
range from $N\le p^{1/2}$ to $N\le p^{2/3}$.  The latter is exactly the
scale $N^3\le p^2$ associated with the Cartesian point--plane incidence
threshold.  For $N=p^\theta$ with $\theta>2/3$,
Theorem~\ref{thm:main} gives the exponent $4/7+3/(7\theta)$ relative
to $N$, still strictly larger than $6/5$ when $\theta<15/22$.
To the best of our knowledge, Corollary~\ref{cor:seventeen-fourteen}
is the first estimate for all non-degenerate quadratic polynomials that
is strictly stronger than $6/5$ throughout the range $N\le p^{2/3}$;
compare \cite{KMPV,Mirzaei,MohammadiStevensLow}.
For $N>p^{2/3}$, the second term in \eqref{eq:main-max} does not
exceed $p$, as required by the ambient field size.

The product estimate also yields the following one-sided consequence.

\begin{corollary}[Small additive doubling]\label{cor:small-doubling}
Under the hypotheses of Theorem~\ref{thm:main}, if $|A+A|\le KN$, then
\[
 |f(A,A)|
 \gtrsim
 K^{-4/3}N^{3/2}(1+N^3/p^2)^{-1/2}
 \gtrsim
 K^{-4/3}\min\{N^{3/2},p\}.
\]
\end{corollary}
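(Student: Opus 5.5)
Corollary~\ref{cor:small-doubling} is a direct algebraic consequence of the product estimate \eqref{eq:main-product} in Theorem~\ref{thm:main}, so the plan is simply to substitute the hypothesis $S\le KN$ and solve for $M$. No new incidence or energy input is needed; all the work is in Theorem~\ref{thm:main}, which we assume.

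First, since $S\le KN$ we have $S^8\le K^8N^8$. Inserting this into \eqref{eq:main-product} gives
\[
 K^8N^8M^6\gtrsim S^8M^6\gtrsim \frac{N^{17}}{(1+N^3/p^2)^3},
\]
hence
\[
 M^6\gtrsim K^{-8}N^{9}(1+N^3/p^2)^{-3}.
\]
Taking sixth roots yields $M\gtrsim K^{-4/3}N^{3/2}(1+N^3/p^2)^{-1/2}$. This is legitimate because $\gtrsim$ only hides constants and powers of logarithms, and a sixth root of such a factor is again of the same type. This is the first inequality.

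For the second inequality, note that $1+N^3/p^2\le 2\max\{1,N^3/p^2\}$. In the range $N^3\le p^2$ the factor is at most $2$, giving $M\gtrsim K^{-4/3}N^{3/2}$. In the range $N^3>p^2$ we get
\[
 N^{3/2}(1+N^3/p^2)^{-1/2}\gg N^{3/2}\cdot pN^{-3/2}=p.
\]
In either case the bound is $\gg K^{-4/3}\min\{N^{3/2},p\}$.

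There is no real obstacle here. The only point requiring care is bookkeeping: raising \eqref{eq:main-product} to the power $1/6$ must preserve the meaning of $\gtrsim$, and the case split at $N=p^{2/3}$ must be correct.
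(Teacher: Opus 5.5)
Your proposal is correct and matches the paper's proof, which likewise just inserts $S\le KN$ into \eqref{eq:main-product} and takes sixth roots. Your case split at $N^3=p^2$ for the second inequality is the routine check that the paper leaves implicit.
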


The proof has three ingredients.  First, collisions of
$F(u,v,w)=f(u+v,w)$ are converted into weighted point--plane incidences.
The centered incidence input is a weighted normalized form of Vinh's
spectral point--hyperplane estimate \cite{Vinh}.  We record a short
Fourier proof because the exact uniform term is essential and the
estimate has no $p^2$ support restriction.  Second, below that threshold we use
Rudnev's point--plane theorem \cite{Rudnev}; see \cite{RRSh} for a
representative sum--product application and \cite{StevensDeZeeuw} for
complementary planar incidence theory.  These two estimates give
Theorem~\ref{thm:fourth-energy}, which is finally inserted into
\cite[Proposition~1]{MohammadiStevens}.

\paragraph{\textbf{The role of AI in this work}}

The authors used ChatGPT 5.5 Plus as an auxiliary research tool during the development of this work. In particular, the formulation and proof strategy of Lemma 2.1 were substantially inspired by suggestions generated through interactions with this AI tool. ChatGPT 5.5 Plus was also used to assist with some algebraic computations, consistency checks, and intermediate calculations arising in the course of the proof. All mathematical statements, arguments, and computations used in the final manuscript were subsequently checked and verified by the authors, who take full responsibility for the correctness of the results and for the final content of the paper.

\section{Centered quadratic collisions}

We first work over an arbitrary finite field $\Fq$ of odd
characteristic.  This section is independent of the prime-field
incidence theorem used later.

\subsection{A weighted Vinh-type point--plane estimate}

Let $\mathcal P=\Fq^3$ and let $\mathcal H$ be the family of normalized
affine planes
\[
 H_{\alpha,\beta,\gamma}
 =
 \{(x,y,z)\in\Fq^3:\alpha x+\beta y+z=\gamma\},
 \qquad
 (\alpha,\beta,\gamma)\in\Fq^3.
\]
For non-negative weights $r:\mathcal P\to\mathbb R$ and
$s:\mathcal H\to\mathbb R$, write
\[
 I(r,s)
 =
 \sum_{P\in\mathcal P}\sum_{\substack{H\in\mathcal H\\P\in H}}
 r(P)s(H).
\]

Vinh's point--hyperplane theorem gives the corresponding centered
unweighted estimate \cite{Vinh}.  The incidence principle is therefore
classical; we record the following weighted normalized form because it
fits the collision parametrization exactly and follows from the same
spectral mechanism.

\begin{lemma}[Weighted Vinh-type incidence estimate]
\label{lem:vinh-weighted}
For the preceding weights,
\begin{equation}\label{eq:vinh-weighted}
 \left|
 I(r,s)-\frac{\|r\|_1\|s\|_1}{q}
 \right|
 \le q\|r\|_2\|s\|_2.
\end{equation}
\end{lemma}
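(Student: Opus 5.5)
Fix a nontrivial additive character $\psi$ of $\Fq$. Incidence is the condition $\alpha x+\beta y+z-\gamma=0$, so we detect it by orthogonality:
\[
 \mathbf 1[\alpha x+\beta y+z=\gamma]=\frac1q\sum_{t\in\Fq}\psi\bigl(t(\alpha x+\beta y+z-\gamma)\bigr).
\]
Substituting this into $I(r,s)$ and separating the term $t=0$ gives exactly $\|r\|_1\|s\|_1/q$. Hence the proof reduces to bounding the error
\[
 E=\frac1q\sum_{t\neq0}\sum_{x,y,z}\sum_{\alpha,\beta,\gamma} r(x,y,z)s(\alpha,\beta,\gamma)\psi\bigl(t(\alpha x+\beta y+z-\gamma)\bigr).
\]

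The plan is to rewrite $E$ with Fourier transforms on $\Fq^3$. Put $\hat r(\xi)=\sum_P r(P)\psi(-\xi\cdot P)$. The inner sum over $P=(x,y,z)$ equals $\hat r(-t\alpha,-t\beta,-t)$. Then, for each fixed $t\neq0$ and $(\alpha,\beta)$, sum over $\gamma$: this produces a one-dimensional transform of $s$ in the $\gamma$ variable evaluated at $t$. So define $\hat s(\alpha,\beta;t)=\sum_\gamma s(\alpha,\beta,\gamma)\psi(-t\gamma)$, which gives
\[
 E=\frac1q\sum_{t\neq0}\sum_{\alpha,\beta}\hat r(-t\alpha,-t\beta,-t)\,\hat s(\alpha,\beta;t)^{\ast},
\]
where the star denotes a conjugation or sign convention that is harmless.

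Next apply Cauchy--Schwarz over the pairs $(t,\alpha,\beta)$ with $t\neq0$. The key observation is that $(t,\alpha,\beta)\mapsto(-t\alpha,-t\beta,-t)$ is injective on $t\neq0$, so its image is the set of frequencies $\xi\in\Fq^3$ with nonzero last coordinate. Plancherel then gives
\[
 \sum_{t\neq0,\alpha,\beta}|\hat r(-t\alpha,-t\beta,-t)|^2\le\sum_{\xi}|\hat r(\xi)|^2=q^3\|r\|_2^2.
\]
On the other side, one-dimensional Plancherel in $\gamma$ for each fixed $(\alpha,\beta)$ gives
\[
 \sum_{t}\sum_{\alpha,\beta}|\hat s(\alpha,\beta;t)|^2=q\|s\|_2^2.
\]
Combining these, $|E|\le q^{-1}(q^3\|r\|_2^2)^{1/2}(q\|s\|_2^2)^{1/2}=q\|r\|_2\|s\|_2$, which is exactly \eqref{eq:vinh-weighted}.

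The main point to check is the injectivity and reindexing step. It is this step that lets the sum over nonzero $t$ be absorbed entirely by Plancherel on $\Fq^3$ for $r$, while only a one-dimensional Plancherel is paid for $s$. If it is done naively, treating $t$ as an extra free variable on both sides, one loses a factor $q^{1/2}$. I would also verify the normalizations carefully, since the exact factor $q$ matters for the later application. Note that no support restriction appears anywhere in the argument, consistent with the remark preceding the lemma.
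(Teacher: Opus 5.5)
Your proof is correct, and the normalizations all check out. The $t=0$ term is exactly $\|r\|_1\|s\|_1/q$. The map $(t,\alpha,\beta)\mapsto(-t\alpha,-t\beta,-t)$ is a bijection from $(\Fq\setminus\{0\})\times\Fq^2$ onto $\{\xi\in\Fq^3:\xi_3\neq0\}$. The two Plancherel identities then give $|E|\le q^{-1}\cdot q^{3/2}\|r\|_2\cdot q^{1/2}\|s\|_2$. No conjugation is actually needed in your formula for $E$.

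The paper runs the same Fourier mechanism in spectral form. It applies the incidence operator $T:\ell^2(\mathcal H)\to\ell^2(\mathcal P)$ to the full character basis of the plane space, indexed by $(\xi,\eta,\tau)$, and finds:
\begin{itemize}
\item the constant character goes to a function of norm $q^2$;
\item the characters with $\tau=0$ and $(\xi,\eta)\neq(0,0)$ are annihilated;
\item those with $\tau\neq0$ go to mutually orthogonal functions of norm $q$, supported on the vertical lines $x=-\xi/\tau$, $y=-\eta/\tau$.
\end{itemize}
It then uses Cauchy--Schwarz on the orthogonal complement of the constants.

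The two routes are dual. The paper transforms fully on the plane side and is left with one-dimensional sums in $z$ on the point side. You transform fully on the point side and only in $\gamma$ on the plane side.

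Your version is slightly more direct, because it never has to check that the images of the characters are orthogonal to one another and to the constants. The paper's version yields the whole singular-value structure of $T$: $q^2$ once, $q$ with multiplicity $q^3-q^2$, and $0$ otherwise. That shows $q$ is exactly the norm of $T$ on mean-zero functions.
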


\begin{proof}
Fix a nontrivial additive character $\chi$ of $\Fq$ and let
$T:\ell^2(\mathcal H)\to\ell^2(\mathcal P)$ be the incidence operator
\[
 (Ts)(x,y,z)
 =
 \sum_{\alpha,\beta\in\Fq}
 s(H_{\alpha,\beta,\alpha x+\beta y+z}).
\]
The normalized characters
$\psi_{\xi,\eta,\tau}(\alpha,\beta,\gamma)
=q^{-3/2}\chi(\xi\alpha+\eta\beta+\tau\gamma)$ form an orthonormal
basis of $\ell^2(\mathcal H)$, and
\[
 (T\psi_{\xi,\eta,\tau})(x,y,z)
 =
 q^{-3/2}\chi(\tau z)
 \sum_{\alpha\in\Fq}\chi((\xi+\tau x)\alpha)
 \sum_{\beta\in\Fq}\chi((\eta+\tau y)\beta).
\]
If $\tau=0$, this vanishes unless $\xi=\eta=0$, in which case its
norm is $q^2$.  If $\tau\neq0$, it is supported on
$x=-\xi/\tau$, $y=-\eta/\tau$, has norm $q$, and the nonzero images
are mutually orthogonal.  Thus $T$ has singular value $q^2$ on the
constant subspace and operator norm $q$ on its orthogonal complement.
Since $T1_{\mathcal H}=q^2 1_{\mathcal P}$, the constant components
contribute $\|r\|_1\|s\|_1/q$.  Cauchy--Schwarz on the orthogonal
complements proves \eqref{eq:vinh-weighted}.
\end{proof}

\subsection{The centered collision estimate}

For a quadratic polynomial $h$ and finite $U,V,W\subseteq\Fq$, put
$F_h(u,v,w)=h(u+v,w)$ and let $E_{F_h}(U,V,W)$ denote the number of
pairs of triples in $(U\times V\times W)^2$ having the same
$F_h$-value.  If
\[
 f(x,y)=a_0x^2+b_0y^2+c_0xy+\ell_{1,0}x+\ell_{2,0}y+\kappa_0,
\]
write $f^{\op}(x,y)=f(y,x)$.  Non-degeneracy is invariant under this
transposition, and $f^{\op}(C,C)=f(C,C)$ for every $C\subseteq\Fq$.

\begin{theorem}[Oriented centered quadratic collision estimate]
\label{thm:centered-collision}
Let $f\in\Fq[x,y]$ be a non-degenerate quadratic polynomial and let
$X=|U||V||W|$.

If $(a_0,b_0)\neq(0,0)$, choose $g\in\{f,f^{\op}\}$ whose
$x^2$-coefficient is nonzero.  Then
\[
 \left|
 E_{F_g}(U,V,W)-\frac{X^2}{q}
 \right|
 \le 2qX.
\]

If $a_0=b_0=0$, then non-degeneracy implies $c_0\neq0$.  Put $g=f$,
$w_*=-\ell_{1,0}/c_0$, $W^\circ=W\setminus\{w_*\}$, and
$X^\circ=|U||V||W^\circ|$.  Then
\[
 \left|
 E_{F_g}(U,V,W^\circ)-\frac{(X^\circ)^2}{q}
 \right|
 \le qX^\circ.
\]
\end{theorem}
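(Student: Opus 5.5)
The plan is to write each collision $F_g(u,v,w)=F_g(u',v',w')$ as an incidence between a point built from $(v,v')$ and a normalized plane built from $(u,w,u',w')$. Then Lemma~\ref{lem:vinh-weighted} with $q\|r\|_2\|s\|_2$ gives the stated error, and the main term $\|r\|_1\|s\|_1/q$ is exactly $X^2/q$.

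\emph{Case $(a_0,b_0)\ne(0,0)$.} Write $g(x,y)=ax^2+cxy+by^2+\ell_1x+\ell_2y+\kappa$ with $a\ne0$. Expanding in $v$ gives
\[
 F_g(u,v,w)=av^2+v(2au+cw+\ell_1)+g(u,w).
\]
So a collision is equivalent to $\alpha v+\beta v'+(av^2-av'^2)=\gamma$, where
\[
 \alpha=2au+cw+\ell_1,\qquad \beta=-(2au'+cw'+\ell_1),\qquad \gamma=g(u',w')-g(u,w).
\]
Take $r$ to be the indicator of the image of $V\times V$ under $(v,v')\mapsto(v,v',av^2-av'^2)$. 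This map is injective, so $\|r\|_1=|V|^2$ and $\|r\|_2=|V|$. Take $s(\alpha,\beta,\gamma)$ to be the number of $(u,w,u',w')\in(U\times W)^2$ mapping to $(\alpha,\beta,\gamma)$, so $\|s\|_1=(|U||W|)^2$ and $I(r,s)=E_{F_g}(U,V,W)$.

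The key claim is that $\Phi(u,w)=(2au+cw+\ell_1,\,g(u,w))$ has fibres of size at most $2$. Once $\alpha$ is fixed, $u=(\alpha-cw-\ell_1)/(2a)$, and $g$ restricted to this line is a polynomial in $w$ of degree at most $2$. If it were constant, then $g$ would be constant on every line $2ax+cy=\text{const}$. Indeed, the $w^2$-coefficient $b-c^2/(4a)$ does not depend on $\alpha$, and when it vanishes we have $g=a(x+\tfrac{c}{2a}y)^2+\ell_1x+\ell_2y+\kappa$. In that situation the linear coefficient along the line is $\ell_2-\ell_1c/(2a)$, which vanishes only if the linear part is proportional to $x+\tfrac{c}{2a}y$, i.e.\ $g=h(x+\tfrac c{2a}y+\tau)$ is degenerate.

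Hence each $(\alpha,\gamma)$-fibre has at most $2$ points. The pair $(\beta,\gamma)$ then determines $(u,w,u',w')$ up to at most $2\cdot2$ choices: for each choice of $(u,w)$, the value $g(u',w')$ is determined, and $\Phi$ has fibres of size at most $2$. Therefore $s\le4$ and $\|s\|_2^2\le4\|s\|_1$, i.e.\ $\|s\|_2\le2|U||W|$, and the error term is at most $q\cdot|V|\cdot2|U||W|=2qX$.

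\emph{Case $a_0=b_0=0$.} Here $c_0\ne0$, since otherwise $f$ is linear and hence degenerate. We have
\[
 F_g=v(c_0w+\ell_{1,0})+\bigl(c_0uw+\ell_{1,0}u+\ell_{2,0}w+\kappa_0\bigr).
\]
Use the points $(v,v',0)$, which are injective in $(v,v')$, and the planes $\alpha=c_0w+\ell_{1,0}$, $\beta=-(c_0w'+\ell_{1,0})$, $\gamma=G(u',w')-G(u,w)$, where $G(u,w)=u(c_0w+\ell_{1,0})+\ell_{2,0}w$. The value $\alpha$ determines $w$. Then for $w\in W^\circ$ the coefficient $c_0w+\ell_{1,0}$ is nonzero, so $G$ determines $u$. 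Thus $s\le1$, $\|s\|_2=|U||W^\circ|$, and Lemma~\ref{lem:vinh-weighted} gives the error bound $qX^\circ$.

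The main obstacle is the fibre-size claim in the first case. This is the one place where non-degeneracy is used, including the subcase $b=c^2/(4a)$ in which the restricted quadratic drops to degree one. The rest is bookkeeping of the $\ell^1$ and $\ell^2$ norms.
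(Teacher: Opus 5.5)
Your proposal has a genuine gap: the bounds $s\le4$ and, in the mixed case, $s\le1$ are false, and not just up to constants. The cause is how you split the six variables. A normalized plane records only $\alpha=2au+cw+\ell_1$, $\beta=-(2au'+cw'+\ell_1)$ and the \emph{difference} $\gamma=g(u',w')-g(u,w)$. The value $g(u,w)$ itself is never recorded.

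Your fibre claim for $\Phi$ is correct; it is equivalent to the paper's observation that $\psi$ is nonconstant. But it needs both $\alpha$ and $g(u,w)$, and $\alpha$ alone only confines $(u,w)$ to a line. The step ``for each choice of $(u,w)$, the value $g(u',w')$ is determined'' tacitly assumes $(u,w)$ is already pinned down to two choices.

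Two examples show the failure:
\begin{itemize}
\item For $g=x^2+y^2$ and $u_0\in U$, every quadruple $(u_0,w,u_0,w)$ with $w\in W$ maps to the plane $(2u_0,-2u_0,0)$, so $s\ge|W|$ there.
\item For $g=xy$, fixing $w=w'\in W^\circ$ and $\gamma=0$ admits every pair $u=u'\in U$, so $s\ge|U|$.
\end{itemize}
More fundamentally, you map the four-parameter family $(U\times W)^2$ into the $q^3$ normalized planes. Cauchy--Schwarz then forces $\|s\|_2\ge\|s\|_1q^{-3/2}=(|U||W|)^2q^{-3/2}$. For $U=V=W=\Fq$, the best bound Lemma~\ref{lem:vinh-weighted} can give with your weights is $q\|r\|_2\|s\|_2\ge q^{9/2}$. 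This exceeds $2qX=2q^4$ once $q>4$.

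The missing idea is a balanced three-against-three split that crosses $v$ with the other triple, so each side records one full pair $(\alpha,g)$. Index points by $(v,u',w')$ and planes by $(v',u,w)$. The collision $F_g(u,v,w)=F_g(u',v',w')$ rearranges to
\[
 (2au+cw+\ell_1)\,v-v'\,(2au'+cw'+\ell_1)+\bigl(av^2-g(u',w')\bigr)=av'^2-g(u,w).
\]
This is an incidence between the point $\bigl(v,\,2au'+cw'+\ell_1,\,av^2-g(u',w')\bigr)$ and the plane with coefficients $\bigl(2au+cw+\ell_1,\,-v',\,av'^2-g(u,w)\bigr)$.

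The point coordinates determine $v$ and $\Phi(u',w')$, hence $(u',w')$ up to two choices by your fibre claim. The same holds for planes. So $\|r\|_1=\|s\|_1=X$ and $\|r\|_2^2,\|s\|_2^2\le2X$, and the lemma gives exactly $2qX$. This is the paper's proof with $u$ and $v$ exchanged: the paper writes $g(x,w)=a(x+\phi(w))^2+\psi(w)$ and indexes points by $(u,v',w')$ and planes by $(u',v,w)$.

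In the mixed case, use points $\bigl(v,\,c_0w'+\ell_{1,0},\,-G(u',w')\bigr)$ and planes with coefficients $\bigl(c_0w+\ell_{1,0},\,-v',\,-G(u,w)\bigr)$. Both are injective for $w,w'\in W^\circ$. Your argument ``$\alpha$ determines $w$, then $G$ determines $u$'' is then valid and yields $qX^\circ$.
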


\begin{proof}
First suppose that the $x^2$-coefficient of $g$ is nonzero.  Write
\[
 g(x,y)=ax^2+by^2+cxy+\ell_1x+\ell_2y+\kappa,
 \qquad a\neq0.
\]
Completing the square in $x$, define
\[
 \phi(w)=\frac{cw+\ell_1}{2a},
 \qquad
 \psi(w)=bw^2+\ell_2w+\kappa-\frac{(cw+\ell_1)^2}{4a}.
\]
Then $g(x,w)=a(x+\phi(w))^2+\psi(w)$.  The polynomial $\psi$ is
nonconstant.  Indeed, if $\psi$ were constant, then $g$ would be a
univariate quadratic polynomial in the affine form $x+\phi(y)$,
contrary to Definition~\ref{def:nondegenerate}.  Hence every fibre of
$\psi$ has cardinality at most two.

For $\sigma(v,w)=v+\phi(w)$, define
\[
 P(u,v',w')
 =
 \bigl(u,\,2a\sigma(v',w'),\,au^2-a\sigma(v',w')^2-\psi(w')\bigr)
\]
and the normalized plane
\[
 \Pi(u',v,w):
 \quad
 2a\sigma(v,w)x_1-u'x_2+x_3
 =
 au'^2-a\sigma(v,w)^2-\psi(w).
\]
Substitution shows that
\[
 P(u,v',w')\in\Pi(u',v,w)
 \quad\Longleftrightarrow\quad
 F_g(u,v,w)=F_g(u',v',w').
\]
The first two coordinates of a parameter point determine $u$ and
$\sigma(v',w')$, while the third then determines $\psi(w')$.  Thus the point
parametrization has multiplicity at most two, and the same argument
applies to the plane parametrization.  If $r$ and $s$ are the induced
weights, then $\|r\|_1=\|s\|_1=X$ and
$\|r\|_2^2,\|s\|_2^2\le2X$.  Lemma~\ref{lem:vinh-weighted} gives the asserted bound.

It remains to treat the pure mixed case
\[
 g(x,y)=cxy+\ell_1x+\ell_2y+\kappa,
 \qquad c\neq0.
\]
For $w\in W^\circ$ one has $cw+\ell_1\neq0$ and
\[
 F_g(u,v,w)=(cw+\ell_1)(u+v)+\ell_2w+\kappa.
\]
Define
\[
 P(w,u',v)
 =
 \bigl(w,\,u',\,(cw+\ell_1)v+\ell_2w-\ell_1u'\bigr)
\]
and
\[
 \Pi(w',u,v'):
 \quad
 cu x_1-cw'x_2+x_3
 =
 (cw'+\ell_1)v'+\ell_2w'-\ell_1u.
\]
Again, direct substitution gives the exact collision--incidence
correspondence.  Both parametrizations are injective: the first two
coordinates determine $w,u'$ on the point side, while the coefficients
of $x_1,x_2$ determine $u,w'$ on the plane side, and
$cw+\ell_1\neq0$ then determines the remaining variable.  Hence the
induced weights are $\{0,1\}$-valued, and
Lemma~\ref{lem:vinh-weighted} gives the asserted bound.
\end{proof}

The deletion of $w_*$ in the pure mixed case is necessary, since on
that slice $F_g(u,v,w_*)$ is independent of $u+v$.

The centered estimate has several immediate consequences.  In the
first case of Theorem~\ref{thm:centered-collision}, put
$\Omega=U\times V\times W$ and $C_0=2$; in the pure mixed case put
$\Omega=U\times V\times W^\circ$ and $C_0=1$.  Write
$X_\Omega=|\Omega|$ and
\[
 \nu_\Omega(z)=|\{\omega\in\Omega:F_g(\omega)=z\}|.
\]

\begin{corollary}[Variance, discrepancy, and image size]
With the preceding notation,
\begin{equation}\label{eq:variance}
 \sum_{z\in\Fq}
 \left(\nu_\Omega(z)-\frac{X_\Omega}{q}\right)^2
 \le C_0qX_\Omega.
\end{equation}
Consequently, for every $Y\subseteq\Fq$,
\begin{equation}\label{eq:discrepancy}
 \left|
 \sum_{z\in Y}\nu_\Omega(z)-\frac{|Y|X_\Omega}{q}
 \right|
 \le
 \sqrt{C_0q|Y|X_\Omega}.
\end{equation}
Moreover,
\[
 |F_g(\Omega)|
 \ge
 \frac{qX_\Omega}{X_\Omega+C_0q^2},
 \qquad
 |\Fq\setminus F_g(\Omega)|
 \le
 \frac{C_0q^3}{X_\Omega},
\]
and for every $\varepsilon>0$ the number of $z\in\Fq$ satisfying
$|\nu_\Omega(z)-X_\Omega/q|\ge\varepsilon X_\Omega/q$ is at most
$C_0q^3/(\varepsilon^2X_\Omega)$.
\end{corollary}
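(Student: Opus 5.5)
The plan is to derive everything from the centered collision bound of Theorem~\ref{thm:centered-collision}, rewritten as a variance identity. Since $\sum_z\nu_\Omega(z)=X_\Omega$ and $\sum_z\nu_\Omega(z)^2=E_{F_g}(\Omega)$, expanding the square gives
\[
 \sum_{z\in\Fq}\Bigl(\nu_\Omega(z)-\frac{X_\Omega}{q}\Bigr)^2
 =E_{F_g}(\Omega)-\frac{2X_\Omega^2}{q}+\frac{X_\Omega^2}{q}
 =E_{F_g}(\Omega)-\frac{X_\Omega^2}{q}.
\]
In the first case Theorem~\ref{thm:centered-collision} bounds the right-hand side by $2qX_\Omega$. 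In the pure mixed case the second display of that theorem, applied with $W^\circ$, bounds it by $qX_\Omega$. Together these give \eqref{eq:variance} with the stated $C_0$.

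For the discrepancy bound \eqref{eq:discrepancy}, I would write the left side as $\bigl|\sum_{z\in Y}(\nu_\Omega(z)-X_\Omega/q)\bigr|$. Cauchy--Schwarz bounds this by $|Y|^{1/2}$ times the square root of the variance sum restricted to $Y$. That restricted sum is at most the full sum in \eqref{eq:variance}, which yields $\sqrt{C_0q|Y|X_\Omega}$.

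For the image-size bounds, I would take $Y=\Fq\setminus F_g(\Omega)$. There $\nu_\Omega$ vanishes, so \eqref{eq:discrepancy} gives $|Y|X_\Omega/q\le\sqrt{C_0q|Y|X_\Omega}$. Squaring gives $|Y|\le C_0q^3/X_\Omega$. For the lower bound on $|F_g(\Omega)|$, I would use Cauchy--Schwarz on the support instead:
\[
 X_\Omega^2\le|F_g(\Omega)|\,E_{F_g}(\Omega)\le|F_g(\Omega)|\Bigl(\frac{X_\Omega^2}{q}+C_0qX_\Omega\Bigr).
\]
Rearranging gives $|F_g(\Omega)|\ge qX_\Omega/(X_\Omega+C_0q^2)$.

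Finally, the exceptional-set count is Chebyshev applied to \eqref{eq:variance}. Each $z$ with $|\nu_\Omega(z)-X_\Omega/q|\ge\varepsilon X_\Omega/q$ contributes at least $\varepsilon^2X_\Omega^2/q^2$ to the variance sum. The number of such $z$ is therefore at most $C_0qX_\Omega\cdot q^2/(\varepsilon^2X_\Omega^2)=C_0q^3/(\varepsilon^2X_\Omega)$.

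There is no real obstacle here. The only point needing care is matching $C_0$ to the correct case of Theorem~\ref{thm:centered-collision}, which amounts to using $W^\circ$ in the mixed case.
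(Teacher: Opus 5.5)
Your proposal is correct and follows the paper's proof step for step. The paper likewise expands the square to get $E_{F_g}(\Omega)-X_\Omega^2/q$, uses Cauchy--Schwarz for the discrepancy, uses $X_\Omega^2\le|F_g(\Omega)|E_{F_g}(\Omega)$ for the image bound, applies the discrepancy to the missing-value set, and uses Chebyshev for the exceptional count; you simply write out the algebra that the paper leaves implicit.
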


\begin{proof}
Expanding the square in \eqref{eq:variance} gives
$E_{F_g}(\Omega)-X_\Omega^2/q$, so
Theorem~\ref{thm:centered-collision} proves the variance bound.
Cauchy--Schwarz gives \eqref{eq:discrepancy}.  The image lower bound
follows from $X_\Omega^2\le |F_g(\Omega)|E_{F_g}(\Omega)$.
Applying \eqref{eq:discrepancy} to the missing-value set gives the
second estimate, while Chebyshev's inequality applied to
\eqref{eq:variance} gives the final assertion.
\end{proof}

\section{The fourth-energy estimate}

We now return to the prime field $\Fp$.  Let $f_0$ be a non-degenerate
quadratic polynomial and let $g\in\{f_0,f_0^{\op}\}$ be the orientation
fixed in Theorem~\ref{thm:centered-collision}.  Since
$g(A,A)=f_0(A,A)$, write $N=|A|$ and $M=|f_0(A,A)|$.

For finite $U,V\subseteq\Fp$, let
\[
 r_{U-V}(z)=|\{(u,v)\in U\times V:u-v=z\}|
\]
and, for $k>0$,
\[
 E_k^+(U,V)=\sum_z r_{U-V}(z)^k.
\]
We abbreviate $E_k^+(U)=E_k^+(U,U)$ and
$E^+(U,V)=E_2^+(U,V)$.  Following the mixed-energy formulation used
in \cite{Mirzaei}, for nonempty $A$ define
\[
 d_4^+(A)
 =
 \sup_{\varnothing\neq U\subseteq\Fp}
 \frac{E_4^+(A,U)}{|A||U|^3}.
\]

We prove the following full-range estimate.

\begin{theorem}[Centered fourth-energy estimate]
\label{thm:fourth-energy}
For every nonempty $A\subseteq\Fp$,
\begin{equation}\label{eq:d4-full}
 d_4^+(A)
 \lesssim
 \frac{M^2}{N^2}+\frac{M^2N}{p^2}.
\end{equation}
Consequently, for every nonempty $U\subseteq\Fp$,
\begin{equation}\label{eq:mixed-energy-full}
 E_4^+(A,U)
 \lesssim
 \left(\frac{M^2}{N^2}+\frac{M^2N}{p^2}\right)N|U|^3.
\end{equation}
\end{theorem}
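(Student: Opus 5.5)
Fix a nonempty $U\subseteq\Fp$. We will bound $E_4^+(A,U)$ by dyadically pigeonholing the popular differences and reducing to a collision count for $F_g(u,v,w)=g(u+v,w)$. Let $D_\tau=\{z: \tau\le r_{A-U}(z)<2\tau\}$. Up to a logarithmic loss,
\[
 E_4^+(A,U)\lesssim \tau^4|D_\tau|
\]
for some $\tau$, and trivially $\tau|D_\tau|\le N|U|$. For each $z\in D_\tau$ there are at least $\tau$ representations $z=a-u$. Hence for $a'\in A$ the values $g(a,a')=g(z+u,a')$ lie in $g(A,A)$, a set of size $M$.

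The key counting step considers triples $(z,u,w)\in D_\tau\times U_z\times A$ with $z+u\in A$, where $U_z=\{u\in U: z+u\in A\}$. There are at least $\tau^2|D_\tau|N$ of them, namely $\tau|D_\tau|N$ triples weighted suitably. I will instead apply Theorem~\ref{thm:centered-collision} with $U\to D_\tau$, $V\to U$, $W\to A$ (or $A\setminus\{w_*\}$ in the pure mixed case, which costs one element). The set $\Omega'=\{(z,u,w)\in D_\tau\times U\times A: z+u\in A\}$ has $|\Omega'|\ge\tau|D_\tau|N$, and $F_g(\Omega')\subseteq g(A,A)$. By Cauchy--Schwarz,
\[
 |\Omega'|^2\le M\cdot E_{F_g}(D_\tau,U,A)\le M\Bigl(\frac{(|D_\tau||U|N)^2}{p}+2p|D_\tau||U|N\Bigr).
\]
This is the dense-regime (spectral) bound. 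In the sparse regime I will replace Lemma~\ref{lem:vinh-weighted} by Rudnev's point--plane theorem applied to the same point and plane families from the proof of Theorem~\ref{thm:centered-collision}. Those families have multiplicity at most two, so they give
\[
 E_{F_g}\ll X^{3/2}+\frac{X^2}{p}
\]
with $X=|D_\tau||U|N$, valid once the number of points is at most $p^2$ (and with the collinearity parameter controlled by $\max(|D_\tau|,|U|,N)\cdot$ slice sizes).

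Combining, $\tau^2|D_\tau|^2N^2\le M\,(X^{3/2}+X^2/p)$. Solving the $X^{3/2}$ term gives
\[
 \tau^4|D_\tau|\lesssim \frac{M^2|U|^3}{N}.
\]
This corresponds to the term $M^2/N^2$ after normalizing by $N|U|^3$. The $X^2/p$ term gives $\tau^2N^2\le M|U|^2N^2/p$, i.e.\ $\tau^2\le M|U|^2/p$. Combined with $\tau\le N$ and $\tau|D_\tau|\le N|U|$, this does not directly yield $M^2N/p^2$. I would redo that case with the full $\tau^4|D_\tau|\le \tau^3 N|U|$ and then use $\tau\le\min(N,|U|)$ together with the squared inequality $\tau^4\le M^2|U|^4/p^2$. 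This gives $\tau^4|D_\tau|\le \tau^4\cdot N|U|/\tau$, and I expect the $M^2N/p^2$ term to emerge after choosing the better of the two bounds on $\tau$.

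The main obstacle is the balancing of these terms, and in particular making Rudnev's theorem applicable. One must check the hypothesis that the point set has at most $p^2$ elements; if $X$ is too large, one can subdivide $D_\tau$ or $U$ into pieces. One must also bound the maximal number of collinear points. The points $(u,2a\sigma,\cdot)$ lie on lines where the first coordinate is fixed in a structured way, and I would bound collinearity by $\max(|D_\tau|,|U|N)$-type quantities, which should be absorbed into the main term. The case split between the dense regime (use Lemma~\ref{lem:vinh-weighted}, giving the $M^2N/p^2$ term via the uniform $X^2/p$ part) and the sparse regime (use Rudnev, giving $M^2/N^2$) yields \eqref{eq:d4-full}. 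Taking the supremum over $U$ then gives \eqref{eq:mixed-energy-full} immediately from the definition of $d_4^+$.
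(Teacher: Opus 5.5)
\textbf{The dense regime has a genuine gap.} Your setup matches the paper: the level set $D_\tau$, the set $\Omega'$ with $|\Omega'|\ge\tau|D_\tau|N$ and $F_g(\Omega')\subseteq g(A,A)$, and the $X^{3/2}$ computation. The failure comes once $X=|D_\tau||U|N\gtrsim p^2$. There the uncentered Cauchy--Schwarz step $|\Omega'|^2\le M\,E_{F_g}$ only gives $\tau^2\lesssim M|U|^2/p$. Write $Q_0=\tau^4|D_\tau|/(N|U|^3)$. Your constraints then give at best
\[
 Q_0\le\frac{\tau^3}{|U|^2}\lesssim\min\Bigl\{\Bigl(\frac{M}{p}\Bigr)^{3/2}|U|,\ \frac{N^3}{|U|^2}\Bigr\}\le\frac{MN}{p}.
\]
This is attained by the admissible choice $|U|=N\sqrt{p/M}$, $\tau=N$, $|D_\tau|=|U|$, which has $X\ge p^2$ exactly when $N^3\ge Mp$. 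Note that $MN/p\ge M^2N/p^2$, and $MN/p>M^2/N^2$ whenever $N^3>Mp$ (e.g.\ $M\asymp N\gg p^{1/2}$). So no choice of bounds on $\tau$ will produce the term $M^2N/p^2$.

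\textbf{The missing idea is to centre before applying Cauchy--Schwarz.} Use the discrepancy bound \eqref{eq:discrepancy} with output set $Y=g(A,A)$, which gives $\tau|D_\tau|N\lesssim MX/p+\sqrt{MpX}$.
\begin{itemize}
\item If the uniform term dominates, then $\tau\lesssim M|U|/p$. This is linear in $M/p$ rather than square-root, so $Q_0\le\tau^3/|U|^2\lesssim M^3|U|/p^3$. Combining with $Q_0\le N^3/|U|^2$ and eliminating $|U|$ gives $Q_0\lesssim M^2N/p^2$.
\item If the fluctuation term dominates, then $|D_\tau|\tau^2N\lesssim Mp|U|$. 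Multiplying this by your own inequality $p\tau^2\lesssim M|U|^2$ gives $Q_0\lesssim M^2/N^2$.
\end{itemize}
So the dense regime produces both terms, not only $M^2N/p^2$.

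\textbf{The sparse regime also needs repair.} Rudnev's theorem gives $E_{F_g}\lesssim X^{3/2}+kX$; there is no $X^2/p$ term, and you cannot drop $kX$. A collinearity bound of order $|U|N$, as you suggest, would make $kX$ swamp everything. You need $k=O(K)$ with $K=\max\{|D_\tau|,|U|,N\}$:
\begin{itemize}
\item The first coordinate of $(u,2a\sigma,au^2-a\sigma^2-\psi(w))$ is injective on any line on which it varies.
\item On a line inside $\{x_1=u_0\}$, each $w$ contributes at most two values of $v$. The defining equation is either linear in $\sigma$, or quadratic with leading coefficient $-\rho a\neq0$.
\item The plane family is carried to the same form by an invertible linear map, and the pure mixed case is handled similarly.
\end{itemize}
Even with $k=O(K)$, the term $KX$ is not dominated by $X^{3/2}$; for example, take $K=|U|>|D_\tau|N$. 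So it is not absorbed. You must treat $K=|D_\tau|$, $K=|U|$ and $K=N$ separately. Using $\tau\le\min\{N,|U|\}$ and $\tau|D_\tau|\le N|U|$, each case gives $Q_0\lesssim M/N$. Then the elementary bound $M\ge N/2$ converts this into $M^2/N^2$.
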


We use two elementary or standard inputs.  First, $M\ge N/2$.
Indeed, if the chosen orientation has nonzero $x^2$-coefficient, then
$x\mapsto g(x,y)$ has fibres of size at most two for every fixed $y$.
In the pure mixed case, for $N\ge2$ one may choose
$y\in A\setminus\{w_*\}$, and then $x\mapsto g(x,y)$ is injective.

The second input is Rudnev's point--plane incidence theorem
\cite{Rudnev}; see also \cite{RRSh} for a sum--product application.
We use the following standard consequence of Rudnev's projective
theorem, after affine restriction and projective duality.

\begin{theorem}[Rudnev]\label{thm:rudnev}
There exists an absolute constant $c_0>0$ such that the following
holds.  Let $\mathcal Q$ be a finite point set and $\Pi$ a finite set of planes
in $\Fp^3$, with $|\mathcal Q|\le|\Pi|$ and
$|\mathcal Q|\le c_0p^2$.  If no affine line contains more than $k$
points of $\mathcal Q$, then
\[
 I(\mathcal Q,\Pi)
 \ll
 |\mathcal Q|^{1/2}|\Pi|+k|\Pi|.
\]
The dual statement, obtained by projective duality, also holds.
\end{theorem}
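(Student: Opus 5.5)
The plan is to reduce the affine statement to Rudnev's projective point--plane theorem \cite{Rudnev} and then obtain the dual version by projective duality.

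\emph{Step 1: the projective input.} We use Rudnev's theorem in the following form. Let $\mathcal Q^*$ be a set of points and $\Pi^*$ a set of planes in $\mathbb P^3(\Fp)$, with $|\mathcal Q^*|\le|\Pi^*|$ and $|\mathcal Q^*|=O(p^2)$. Let $k$ be the maximum number of points of $\mathcal Q^*$ on a projective line. Then
\[
 I(\mathcal Q^*,\Pi^*)\ll |\mathcal Q^*|^{1/2}|\Pi^*|+k|\Pi^*|.
\]
The implied constant in the hypothesis $|\mathcal Q^*|=O(p^2)$ is where the absolute constant $c_0$ comes from. We do not reprove this result; it follows from the Guth--Katz/Koll\'ar line-intersection bound via the Klein quadric, and we only verify that its hypotheses are met.

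\emph{Step 2: affine restriction.} Embed $\Fp^3\hookrightarrow\mathbb P^3(\Fp)$ by $(x,y,z)\mapsto[x:y:z:1]$. Each affine plane $\{\alpha x+\beta y+\gamma z=\delta\}$ is the affine part of a unique projective plane $\{\alpha X+\beta Y+\gamma Z-\delta W=0\}$, different from the plane at infinity. This correspondence is injective and preserves incidences with affine points, so $I(\mathcal Q,\Pi)=I(\mathcal Q^*,\Pi^*)$ and the cardinalities are unchanged. Every projective line meets the affine chart in either an affine line or the empty set. Hence the number of points of $\mathcal Q^*$ on a projective line is at most the number of points of $\mathcal Q$ on an affine line, which is at most $k$. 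Step 1 then gives the affine bound directly.

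\emph{Step 3: the dual statement.} The dual version reads: if $|\Pi|\le|\mathcal Q|$, $|\Pi|\le c_0p^2$, and no line lies in more than $k$ planes of $\Pi$, then
\[
 I\ll|\Pi|^{1/2}|\mathcal Q|+k|\mathcal Q|.
\]
Apply the duality $[a]\leftrightarrow\{a\cdot X=0\}$ of $\mathbb P^3(\Fp)$. It is an incidence-preserving bijection between points and planes, and it sends lines to lines. Under it, the planes of $\Pi^*$ containing a fixed line become points on the dual line. So the hypothesis on $\Pi$ becomes the collinearity hypothesis on the dual point set, with the roles of $|\mathcal Q|$ and $|\Pi|$ swapped. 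Now apply Step 1 in $\mathbb P^3$.

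One subtlety: the dual configuration may contain points at infinity or the plane at infinity. This causes no trouble, because Step 1 is a genuinely projective statement and we never need to return to an affine chart.

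\emph{Expected main obstacle.} The only delicate point is bookkeeping rather than mathematics: checking that passing from affine to projective lines, and back through duality, does not increase the collinearity parameter $k$. Step 2 shows it can only decrease, which is exactly the direction we need.
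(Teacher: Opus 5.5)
Your route is the paper's route. The paper gives no argument beyond invoking Rudnev's projective theorem with affine restriction and projective duality, and your Steps 1 and 2 carry this out correctly for the point-side statement.

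The one thing to fix is your closing claim that passing from affine to projective lines ``and back through duality'' can only decrease $k$. That holds for points, but not for planes, because every plane of a parallel class contains the same line at infinity. For instance, no affine line lies in two of the planes $z=c_1,\dots,z=c_n$, yet their dual points $[0:0:1:-c_i]$ are collinear. So the collinearity parameter of the dual configuration is $n$, not $1$.

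Hence the dual statement must assume that no \emph{projective} line lies in more than $k$ planes of $\Pi$. This includes lines at infinity, i.e.\ it also bounds every parallel class. Under an affine-only hypothesis, your sentence ``the hypothesis on $\Pi$ becomes the collinearity hypothesis on the dual point set'' fails.

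This is exactly how the paper applies the dual in Lemma~\ref{lem:small-collision}. The planes there are normalized as $\alpha x+\beta y+z=\gamma$, so their coefficient points $(\alpha,\beta,\gamma)$ all lie in one affine chart of the dual space. Your Step 2 argument then applies there verbatim, and parallel classes appear simply as the affine lines with $(\alpha,\beta)$ fixed.
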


For $F(u,v,w)=g(u+v,w)$ we need the following sparse collision bound.

\begin{lemma}[Sparse collision estimate]
\label{lem:small-collision}
Let $U,V,W\subseteq\Fp$ be nonempty; in the pure mixed case take
$W\subseteq\Fp\setminus\{w_*\}$.  Set $X=|U||V||W|$ and
$K=\max\{|U|,\allowbreak |V|,\allowbreak |W|\}$.  If $X\le c_0p^2$, then
\begin{equation}\label{eq:small-collision}
 E_F(U,V,W)\lesssim X^{3/2}+KX.
\end{equation}
\end{lemma}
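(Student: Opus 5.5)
We reuse the collision--incidence correspondence from the proof of Theorem~\ref{thm:centered-collision} and replace the spectral Lemma~\ref{lem:vinh-weighted} by Rudnev's Theorem~\ref{thm:rudnev}.

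\textbf{Weighted incidence setup.} In the case of nonzero $x^2$-coefficient, the collision count is
\[
E_F(U,V,W)=\sum_{P,\Pi} r(P)s(\Pi)\,[P\in\Pi],
\]
where the points are $P(u,v',w')$ and the planes are $\Pi(u',v,w)$. Each parametrization is at most two-to-one. In the pure mixed case both parametrizations are injective, because $W$ avoids $w_*$. So, up to a factor $4$, $E_F$ is at most the unweighted incidence count $I(\mathcal Q,\Pi)$. Here $\mathcal Q$ is the set of distinct points, $\Pi$ is the set of distinct planes, and $|\mathcal Q|,|\Pi|\le X$. More precisely, $I(r,s)\le 4\,I(\mathcal Q,\Pi)$, since $r,s\le 2$. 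Replacing $\mathcal Q$ by its support only lowers its size, and Rudnev's bound is monotone. If needed, we pad or apply the dual statement so that the hypothesis $|\mathcal Q|\le|\Pi|$ holds. Both sizes are at most $X\le c_0p^2$, so Theorem~\ref{thm:rudnev}, or its dual, applies and gives
\[
I\ll X^{1/2}X+kX.
\]
The task then reduces to showing that the collinearity parameter $k$ satisfies $k\ll K$, for points and, in the dual version, for planes.

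\textbf{Bounding collinearity.} This is the main step. In the non-mixed case the point set is
\[
\mathcal Q=\{(u,\,2a\sigma,\,au^2-a\sigma^2-\psi(w')) : u\in U,\ \sigma=v'+\phi(w')\}.
\]
Take a line $\ell$.
\begin{itemize}
\item If $\ell$ is not contained in a plane $x_1=\text{const}$, then $\ell$ meets each slice $x_1=u$ at most once. Hence it contains at most $|U|\le K$ points.
\item If $\ell$ lies in $x_1=u_0$, then along $\ell$ the third coordinate is affine in $x_2=2a\sigma$. Such a point requires $-a\sigma^2-\psi(w')$ to be affine in $\sigma$. For each $w'\in W$, the value $\sigma=v'+\phi(w')$ ranges over a translate of $V$. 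The quadratic equation in $\sigma$ has at most two roots for each fixed $\psi(w')$. This gives at most $2|W|\le 2K$ points, counting $w'$ with multiplicity.
\end{itemize}
So $k\ll K$.

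For planes we argue dually: collinear planes are planes through a common line. The plane $\Pi(u',v,w)$ has normal $(2a\sigma,-u',1)$. A pencil of planes through a line has normals lying in a plane, and its right-hand sides are affinely determined by the normals. Fixing $u'$ or fixing $\sigma$ gives the same two cases as above, so the bound $\ll K$ follows in the same way.

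The pure mixed case is simpler. The points are $(w,u',\ast)$ with the last coordinate affine in $v$ with nonzero slope. A line not lying over a single $(w,u')$ meets each fibre over a fixed first coordinate once, or, if it lies in a slice $x_1=w$, once for each $u'$. Either way it carries $\le K$ points. A vertical line over a single $(w,u')$ contains $\le |V|$ points. The planes are treated symmetrically.

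\textbf{Conclusion and main obstacle.} Combining the incidence bound with $k\ll K$ yields $E_F\ll X^{3/2}+KX$, which is \eqref{eq:small-collision}. The expected obstacle is the careful case analysis for the collinear configurations, in particular vertical lines and the dual pencil condition. There one must check that the quadratic term $a\sigma^2$, together with the nonconstancy of $\psi$, prevents more than $O(K)$ parameter triples from collapsing onto one line. The $\lesssim$ in the statement leaves room for a dyadic decomposition of the weights, should a multiplicity argument be needed instead of the direct $r,s\le 2$ bound.
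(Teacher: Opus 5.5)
Your proposal is correct and follows the paper's proof essentially step for step. It uses the same at-most-two-to-one parametrization (so $E_F\le 4I(\mathcal Q_0,\Pi_0)$ with $|\mathcal Q_0|,|\Pi_0|\le X$), the same line-richness dichotomy (either the first coordinate varies along the line, or the line lies in a slice $x_1=u_0$ where a quadratic in $\sigma$ with leading coefficient $-a$ has at most two roots per $w'$), the same transfer of this bound to the plane family, and then Rudnev's theorem, dualized if necessary.

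The one subcase you leave out is a line in the slice $x_1=u_0$ that is parallel to the $x_3$-axis, where the third coordinate is not affine in $x_2$. Dually, this is a family of parallel planes: they share a line at infinity and correspond to coefficient points on a vertical line, not to a pencil through an affine line, so your pencil description does not cover them. In both situations $u$ and $\sigma$ are fixed, so each $w'\in W$ contributes at most one point and the count is at most $|W|\le K$. This is exactly the paper's $\rho=0$ case.
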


\begin{proof}
Suppose first that the oriented polynomial has nonzero $x^2$-coefficient
and use the notation from the proof of
Theorem~\ref{thm:centered-collision}.  Let $\mathcal Q_0$ and $\Pi_0$
be the sets of distinct parameter points and planes.  Their
parametrization multiplicities are at most two, so
$X/2\le|\mathcal Q_0|,|\Pi_0|\le X$, and
$E_F(U,V,W)\le4I(\mathcal Q_0,\Pi_0)$.

The point set is contained in
\[
 \left\{
 \bigl(u,2a(v+\phi(w)),
 au^2-a(v+\phi(w))^2-\psi(w)\bigr):
 u\in U,\ v\in V,\ w\in W
 \right\}.
\]
Every affine line contains $O(K)$ points of this set.  If the first
coordinate is nonconstant on the line, it is injective there.  If it
is constant, say $u=u_0$, then the line is given in that coordinate
plane by $\lambda Y+\rho Z=\omega$.  For each fixed $w$, if $\rho=0$
the equation determines at most one $v$, while if $\rho\neq0$ it is a
quadratic equation in $v$ with nonzero leading coefficient
$-\rho a$.  Identify a normalized plane $\alpha x+\beta y+z=\gamma$ with its
coefficient point $(\alpha,\beta,\gamma)$ in the affine chart of
projective dual space.  The coefficient-point set corresponding to
$\Pi_0$ is sent by the invertible map
$(x,y,z)\mapsto(-y,x,z)$ to a set of the same form.  Thus the dual
family also has line-richness $O(K)$.
Theorem~\ref{thm:rudnev}, after dualizing if necessary, proves
\eqref{eq:small-collision}.

In the pure mixed case both parametrizations in
Theorem~\ref{thm:centered-collision} are injective.  The point family
is contained in
\[
 \{(w,u,(cw+\ell_1)v+\ell_2w-\ell_1u):
 w\in W,\ u\in U,\ v\in V\}.
\]
If either of the first two coordinates varies on a line, that
coordinate is injective; if both are constant, the distinct points are
parametrized injectively by $v$ because $cw+\ell_1\neq0$.  Hence the
line-richness is at most $K$.  The dual
coefficient points are carried to the same form by the invertible map
$(x,y,z)\mapsto(-y/c,x/c,z)$, so Theorem~\ref{thm:rudnev} again gives
\eqref{eq:small-collision}.
\end{proof}

\subsection{Proof of the fourth-energy estimate}

\begin{proof}[Proof of Theorem~\ref{thm:fourth-energy}]
The case $N=1$ is immediate, so assume $N\ge2$.  Fix a nonempty
$B\subseteq\Fp$ and write $L=|B|$.  A dyadic decomposition of
$r_{A-B}$ gives a set $D\subseteq\Fp$, with $d=|D|$, and a dyadic
parameter $t\ge1$ such that
\begin{equation}\label{eq:dyadic-reduction}
 E_4^+(A,B)\lesssim dt^4,
 \qquad
 dt\le NL,
 \qquad
 t\le\min\{N,L\}.
\end{equation}
Put
\begin{equation}\label{eq:Q-def}
 Q_0=\frac{dt^4}{NL^3}.
\end{equation}
It is enough to bound $Q_0$.

In the pure mixed case set $A^\circ=A\setminus\{w_*\}$; otherwise set
$A^\circ=A$.  Write $N^\circ=|A^\circ|$.  Since $N\ge2$,
$N/2\le N^\circ\le N$.  Let $Y=g(A,A)=f_0(A,A)$, $P=dLN^\circ$, and,
for $z\in\Fp$, let
\[
 \mathcal N(z)
 =
 |\{(u,v,w)\in D\times B\times A^\circ:F(u,v,w)=z\}|.
\]
For each $u\in D$ there are at least $t$ pairs $(a,b)\in A\times B$
with $u=a-b$.  For every such pair and every $w\in A^\circ$,
$F(u,b,w)=g(a,w)\in Y$.  Hence
\begin{equation}\label{eq:representation-lower}
 \sum_{z\in Y}\mathcal N(z)\ge dtN^\circ.
\end{equation}

\emph{Small-product case.}
Assume $P\le c_0p^2$.  By Cauchy--Schwarz,
\eqref{eq:representation-lower}, and Lemma~\ref{lem:small-collision},
\[
 (dtN^\circ)^2
 \lesssim
 M(P^{3/2}+KP),
 \qquad
 K=\max\{d,L,N^\circ\}.
\]
If $P^{3/2}$ dominates, then
$d^{1/2}t^2(N^\circ)^{1/2}\lesssim ML^{3/2}$; after squaring,
$dt^4N^\circ\lesssim M^2L^3$.  Since $N^\circ\ge N/2$,
$Q_0\lesssim M^2/N^2$.

Suppose that $KP$ dominates.  Then
$dt^2N^\circ\lesssim MKL$.  If $K=d$, then
$t^2N^\circ\lesssim ML$ and, using $dt\le NL$,
\[
 Q_0\le\frac{t^3}{L^2}
 =
 \frac{t^2N^\circ}{ML}\,
 \frac{Mt}{N^\circ L}
 \lesssim\frac{M}{N}.
\]
If $K=L$, then $dt^2N^\circ\lesssim ML^2$ and
$t^2\le NL\le2N^\circ L$, whence
\[
 Q_0
 =
 \frac{dt^2N^\circ}{ML^2}\,
 \frac{Mt^2}{NN^\circ L}
 \lesssim\frac{M}{N}.
\]
If $K=N^\circ$, then $dt^2\lesssim ML$, so
\[
 Q_0
 =
 \frac{dt^2}{ML}\,
 \frac{Mt^2}{NL^2}
 \lesssim\frac{M}{N},
\]
using $t\le L$.  Since $M\ge N/2$, throughout the small-product case
\[
 Q_0\lesssim\frac{M^2}{N^2}.
\]

\emph{Large-product case.}
Assume $P>c_0p^2$.  Applying
\eqref{eq:discrepancy} with
$\Omega=D\times B\times A^\circ$ and output set $Y$, and using
\eqref{eq:representation-lower}, gives
\begin{equation}\label{eq:large-discrepancy}
 dtN^\circ
 \lesssim
 \frac{MP}{p}+\sqrt{MpP}.
\end{equation}
If the uniform term dominates, then $t\lesssim ML/p$.  From
\eqref{eq:dyadic-reduction} and \eqref{eq:Q-def},
\[
 Q_0\le\frac{t^3}{L^2}
 \lesssim\frac{M^3L}{p^3},
 \qquad
 Q_0\le\frac{N^3}{L^2}.
\]
The second inequality gives
$L\le N^{3/2}Q_0^{-1/2}$.  Substituting this into the first yields
$Q_0^{3/2}\lesssim M^3N^{3/2}/p^3$, and hence
\[
 Q_0\lesssim\frac{M^2N}{p^2}.
\]

Suppose instead that the fluctuation term in
\eqref{eq:large-discrepancy} dominates.  Squaring gives
\begin{equation}\label{eq:fluctuation-one}
 dt^2N^\circ\lesssim MpL.
\end{equation}
By Theorem~\ref{thm:centered-collision},
\[
 E_F(D,B,A^\circ)
 \lesssim
 \frac{P^2}{p}+pP
 \lesssim
 \frac{P^2}{p},
\]
because $P>c_0p^2$ and $c_0$ is an absolute constant.  Cauchy--Schwarz and
\eqref{eq:representation-lower} therefore give
\begin{equation}\label{eq:fluctuation-two}
 pt^2\lesssim ML^2.
\end{equation}
Using \eqref{eq:fluctuation-one} and
\eqref{eq:fluctuation-two} in the exact factorization
\[
 Q_0
 =
 \left(\frac{dt^2N^\circ}{MpL}\right)
 \left(\frac{pt^2}{ML^2}\right)
 \left(\frac{M^2}{NN^\circ}\right)
\]
gives $Q_0\lesssim M^2/N^2$.

The two cases are exhaustive, so
\[
 Q_0
 \lesssim
 \frac{M^2}{N^2}+\frac{M^2N}{p^2}.
\]
Together with \eqref{eq:dyadic-reduction} and
\eqref{eq:Q-def}, this proves \eqref{eq:d4-full}; the definition of
$d_4^+(A)$ then gives \eqref{eq:mixed-energy-full}.
\end{proof}

\section{Popular-sum amplification}

We use one external additive-combinatorial input, namely the part of
\cite[Proposition~1]{MohammadiStevens} needed below.  Mohammadi and
Stevens explicitly extract this purely additive double-counting
inequality from the argument of Rudnev, Shakan and Shkredov
\cite{RudnevShakanShkredov}; it has no finite-field size hypothesis.
The dyadic losses are absorbed by $\lesssim$.

\begin{proposition}[Popular-sum double counting]\label{prop:popular-sum}
Let $A\subseteq\Fp$, $|A|=N$, and $S=|A+A|$.  There exist a set
$B\subseteq A$ with $|B|\gg N$, nonempty sets
$F\subseteq B-B$ and $E\subseteq A-F$, and dyadic parameters
$\nu,\mu\ge1$ such that
\[
 E_{4/3}^+(B)\approx |F|\nu^{4/3},
 \qquad
 \nu\le r_{B-B}(d)<2\nu\quad(d\in F),
\]
\[
 \mu\le r_{A-F}(e)<2\mu\quad(e\in E),
\]
and
\begin{equation}\label{eq:popular-operator}
 E_{4/3}^+(B)^3
 \lesssim
 \frac{
 S^8 E_4^+(A)^2 E_4^+(A,E)\mu^4\nu^4
 }{N^{24}}.
\end{equation}
\end{proposition}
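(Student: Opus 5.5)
I will prove Proposition~\ref{prop:popular-sum} by following the Rudnev--Shakan--Shkredov double-counting argument as extracted in \cite[Proposition~1]{MohammadiStevens}. It is purely additive, so nothing about $\Fp$ beyond its abelian group structure is used.

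\emph{Step 1: choose $B$.} I would pick $B\subseteq A$ with $|B|\gg N$ so that a positive proportion of the additive structure of $A$ sits on popular sums. Concretely, I would take the pairs $(a,a')\in A\times A$ whose sum $a+a'$ has at least $N^2/(2S)$ representations. A pigeonhole argument shows these pairs number $\gg N^2$. A further popularity pass then produces $B$ with $|B|\gg N$ such that each $b\in B$ lies in $\gg N^2/S$ such popular pairs.

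\emph{Step 2: dyadic pigeonholing.} I would decompose $r_{B-B}$ dyadically to find $F$ and $\nu$ with $E_{4/3}^+(B)\approx|F|\nu^{4/3}$, losing only a logarithm. I would then decompose $r_{A-F}$ over $A-F$ to obtain $E$ and $\mu$. The level $\mu$ is chosen so that the incidences $\{(a,d):a\in A,\ d\in F,\ a-d\in E\}$ capture a $\gtrsim1$ fraction of a weighted count of the form $\sum_{d\in F}\sum_{a}(\cdots)$, which arises below.

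\emph{Step 3: double counting.} For each $d=b-b'\in F$ with $\nu$ representations, I would use the popular-sum property to rewrite $b=s-a$ and $b'=s'-a'$, where $s,s'\in A+A$. This yields identities of the shape $d=(s-s')-(a-a')$. Counting such solutions in two ways gives
\[|F|\nu\cdot(N^2/S)^{2}\lesssim\sum_{s,s'}\sum_{a,a'}1_{F}(\ldots)\]
for the left-hand side, and $S^2$ times a convolution expression on the right-hand side. I would then apply Hölder's inequality with exponents tuned to produce the fourth energies. One factor $r_{A-A}$ is controlled by $E_4^+(A)^{1/4}$, used twice. The factor involving translates $a-d$ is controlled by $r_{A-F}$ restricted to $E$, which contributes $E_4^+(A,E)^{1/4}\mu$. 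Raising to the appropriate power and substituting $|F|=E_{4/3}^+(B)\nu^{-4/3}$ should give \eqref{eq:popular-operator}, with the exponents balancing as $S^8/N^{24}$ and $\mu^4\nu^4$.

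\emph{Main obstacle.} The hardest part is the Hölder bookkeeping in Step 3. I need to choose the exponents and the popular subsets so that exactly $E_4^+(A)^2E_4^+(A,E)$ appears with the stated powers of $S$, $N$, $\mu$ and $\nu$. In particular, the restriction to $E$ must be placed so that the sum over $A-F$ concentrates on a single dyadic level without losing more than a logarithm. My first attempt would be to mirror the operator-norm formulation in \cite{RudnevShakanShkredov}, treating $1_F$ as a kernel and bounding its action via the $\ell^4$ norms of $r_{A-A}$ and $r_{A-E}$. If needed, I would fall back on quoting \cite[Proposition~1]{MohammadiStevens} directly, since the statement here is precisely the part of that proposition recorded there.
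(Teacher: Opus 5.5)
\paragraph{Comparison with the paper.}
The paper gives no proof of this proposition. It imports it from \cite[Proposition~1]{MohammadiStevens}, remarking only that the inequality is purely additive (so no size condition relative to $p$ is needed) and that dyadic losses are absorbed into $\lesssim$. Your fallback of quoting that proposition is therefore exactly the paper's route. It is also the only part of your proposal that actually establishes the statement.

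\paragraph{The gap in Steps~1--3.}
Taken as a self-contained derivation, Steps~1--3 have a genuine gap. You never fix the double-counted quantity or the H\"older exponents, and the bookkeeping you sketch cannot close.
\begin{enumerate}[label=(\roman*)]
\item Your identity $d=(s-s')-(a-a')$ contains only one difference from $A-A$. It is unclear where two independent copies of $E_4^+(A)$ would come from.
\item You use the popular sums only through a per-element count $N^2/S$ on the left and $|P|^2\le S^2$ on the right. Each count therefore carries a net factor $S^4$. After the fourth power needed to reach $E_4^+(A)^2E_4^+(A,E)\mu^4$, this becomes $S^{16}$ instead of $S^8$.
\end{enumerate}

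\paragraph{The missing idea.}
You need to expand each popular sum rather than merely count it.
\begin{enumerate}[label=(\roman*)]
\item Pigeonholing actually gives $B$ on which every $b$ has $\gg N$ partners $a\in A$ with $a+b$ popular, not just $\gg N^2/S$.
\item Each popular sum has at least $N^2/(2S)$ representations, so $r_{A+A-A}(b)\gg N^3/S$ for $b\in B$.
\item Write $b=a_1+x$ and $b'=a_1'+y$, with $x,y\in A-A$ counted with weight $r_{A-A}$. This produces two factors $r_{A-A}$ per representation $d=b-b'$, which is the source of $E_4^+(A)^2$.
\item The resulting count has lower bound $\gg|F|\nu N^6/S^2$. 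Its fourth power is exactly the normalization $N^{24}/S^8$ in \eqref{eq:popular-operator}.
\item The compatibility relation $a_1'=(a_1-d)+(x-y)$, with $a_1-d\in A-F$, is where $r_{A-F}$, $E$ and $\mu$ enter.
\end{enumerate}
Even with this set-up, the remaining H\"older argument is the real content of \cite[Proposition~1]{MohammadiStevens}. That is why the paper cites it rather than reproving it.
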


\begin{proof}[Proof of Theorem~\ref{thm:main}]
For bounded $N$, the result follows after adjusting the implicit
constant.  We may therefore assume that the dyadic selections in
Proposition~\ref{prop:popular-sum} are available.  Put
\[
 R=1+\frac{N^3}{p^2}.
\]
By Theorem~\ref{thm:fourth-energy},
\[
 E_4^+(A)\lesssim M^2N^2R,
 \qquad
 E_4^+(A,E)\lesssim\frac{M^2|E|^3}{N}R.
\]
Substitution into \eqref{eq:popular-operator} gives
\[
 E_{4/3}^+(B)^3
 \lesssim
 \frac{S^8M^6|E|^3\mu^4\nu^4R^3}{N^{21}}.
\]
Since
\[
 |E|\mu
 \le
 \sum_{e\in E}r_{A-F}(e)
 \le N|F|
\]
and $\mu\le N$, one has
$|E|^3\mu^4=(|E|\mu)^3\mu\le N^3|F|^3\mu$.  On the other hand,
$E_{4/3}^+(B)^3\gtrsim |F|^3\nu^4$.  Cancelling
$|F|^3\nu^4$ gives
\[
 N^{18}\lesssim S^8M^6\mu R^3.
\]
The bound $\mu\le N$ now proves \eqref{eq:main-product}.  Since
$S^8M^6\le\max\{S,M\}^{14}$, the first inequality in
\eqref{eq:main-max} follows.  The second follows from
\[
 (1+N^3/p^2)^{-3/14}
 \gg
 \min\{1,(p^2/N^3)^{3/14}\}.
\]
\end{proof}

Corollary~\ref{cor:seventeen-fourteen} follows from
$N^3/p^2\le1$.  Corollary~\ref{cor:small-doubling} follows by inserting
$S\le KN$ into \eqref{eq:main-product} and taking sixth roots.


\begin{thebibliography}{99}

\bibitem{AralaChow}
N.~Arala and S.~Chow,
\emph{Incidence geometry and polynomial expansion over finite fields},
Proc. Roy. Soc. Edinburgh Sect. A (2026), 1--12,
doi:10.1017/prm.2026.10130.

\bibitem{BKT}
J.~Bourgain, N.~Katz and T.~Tao,
\emph{A sum-product estimate in finite fields, and applications},
Geom. Funct. Anal. \textbf{14} (2004), no.~1, 27--57.

\bibitem{BukhTsimerman}
B.~Bukh and J.~Tsimerman,
\emph{Sum-product estimates for rational functions},
Proc. Lond. Math. Soc. (3) \textbf{104} (2012), no.~1, 1--26,
doi:10.1112/plms/pdr018.

\bibitem{ErdosSzemeredi}
P.~Erd\H{o}s and E.~Szemer\'edi,
\emph{On sums and products of integers},
in \emph{Studies in Pure Mathematics}, Birkh\"auser, Basel, 1983,
213--218.

\bibitem{Garaev}
M.~Z.~Garaev,
\emph{An explicit sum-product estimate in $\mathbb F_p$},
Int. Math. Res. Not. IMRN (2007), Art.~ID rnm035.

\bibitem{HartLiShen}
D.~Hart, L.~Li and C.-Y.~Shen,
\emph{Fourier analysis and expanding phenomena in finite fields},
Proc. Amer. Math. Soc. \textbf{141} (2013), no.~2, 461--473,
doi:10.1090/S0002-9939-2012-11338-3.

\bibitem{HegyvaryHennecart}
N.~Hegyv\'ari and F.~Hennecart,
\emph{Conditional expanding bounds for two-variable functions over prime fields},
European J. Combin. \textbf{34} (2013), no.~8, 1365--1382,
doi:10.1016/j.ejc.2013.05.021.

\bibitem{KMPV}
D.~Koh, H.~Nassajian Mojarrad, T.~Pham and C.~Valculescu,
\emph{Four-variable expanders over the prime fields},
Proc. Amer. Math. Soc. \textbf{146} (2018), no.~12, 5025--5034,
doi:10.1090/proc/14177.

\bibitem{Mirzaei}
M.~Mirzaei,
\emph{A note on conditional expanders over prime fields},
Discrete Math. \textbf{343} (2020), no.~9, 111951,
doi:10.1016/j.disc.2020.111951.

\bibitem{MohammadiStevensLow}
A.~Mohammadi and S.~Stevens,
\emph{Low-energy decomposition results over finite fields},
arXiv:2102.01655v3 (2021).

\bibitem{MohammadiStevens}
A.~Mohammadi and S.~Stevens,
\emph{Attaining the exponent $5/4$ for the sum-product problem in finite fields},
Int. Math. Res. Not. IMRN \textbf{2023} (2023), no.~4, 3516--3532,
doi:10.1093/imrn/rnab338.

\bibitem{MojarradPham}
H.~Nassajian Mojarrad and T.~Pham,
\emph{Conditional expanding bounds for two-variable functions over arbitrary fields},
J. Number Theory \textbf{186} (2018), 137--146,
doi:10.1016/j.jnt.2017.09.020.

\bibitem{PhamVinhDeZeeuw}
T.~Pham, L.~A.~Vinh and F.~de Zeeuw,
\emph{Three-variable expanding polynomials and higher-dimensional distinct distances},
Combinatorica \textbf{39} (2019), no.~2, 411--426,
doi:10.1007/s00493-017-3773-y.

\bibitem{RRSh}
O.~Roche-Newton, M.~Rudnev and I.~D.~Shkredov,
\emph{New sum-product type estimates over finite fields},
Adv. Math. \textbf{293} (2016), 589--605,
doi:10.1016/j.aim.2016.02.019.

\bibitem{Rudnev}
M.~Rudnev,
\emph{On the number of incidences between points and planes in three dimensions},
Combinatorica \textbf{38} (2018), no.~1, 219--254,
doi:10.1007/s00493-016-3329-6.

\bibitem{RudnevShakanShkredov}
M.~Rudnev, G.~Shakan and I.~D.~Shkredov,
\emph{Stronger sum-product inequalities for small sets},
Proc. Amer. Math. Soc. \textbf{148} (2020), no.~4, 1467--1479,
doi:10.1090/proc/14902.

\bibitem{ShakanShkredov}
G.~Shakan and I.~D.~Shkredov,
\emph{Breaking the $6/5$ threshold for sums and products modulo a prime},
arXiv:1806.07091 (2018).

\bibitem{StevensDeZeeuw}
S.~Stevens and F.~de Zeeuw,
\emph{An improved point-line incidence bound over arbitrary fields},
Bull. Lond. Math. Soc. \textbf{49} (2017), no.~5, 842--858,
doi:10.1112/blms.12077.

\bibitem{Tao}
T.~Tao,
\emph{Expanding polynomials over finite fields of large characteristic,
and a regularity lemma for definable sets},
Contrib. Discrete Math. \textbf{10} (2015), no.~1, 22--98.

\bibitem{TranNguyen}
P.~D.~Tran and N.~Van The,
\emph{Some sum-product type estimates for two-variables over prime fields},
Discrete Appl. Math. \textbf{289} (2021), 1--11,
doi:10.1016/j.dam.2020.09.017.

\bibitem{Vinh}
L.~A.~Vinh,
\emph{The Szemer\'edi--Trotter type theorem and the sum-product estimate in finite fields},
European J. Combin. \textbf{32} (2011), no.~8, 1177--1181,
doi:10.1016/j.ejc.2011.06.008.

\bibitem{Vu}
V.~H.~Vu,
\emph{Sum-product estimates via directed expanders},
Math. Res. Lett. \textbf{15} (2008), no.~2, 375--388.

\end{thebibliography}
\end{document}